\providecommand{\U}[1]{\protect\rule{.1in}{.1in}}
\theoremstyle{definition}
\newtheorem{theo}{Theorem}[section]
\newenvironment{theorem}[1][]
{\begin{theo}[#1]\begin{leftbar}}
{\end{leftbar}\end{theo}}
\newtheorem{lem}[theo]{Lemma}
\newtheorem{prop}[theo]{Proposition}
\newtheorem{defi}[theo]{Definition}
\newtheorem{remk}[theo]{Remark}
\newtheorem{coro}[theo]{Corollary}
\newtheorem{conv}[theo]{Convention}
\newtheorem{quest}[theo]{Question}
\newenvironment{question}[1][]
{\begin{quest}[#1]\begin{leftbar}}
{\end{leftbar}\end{quest}}
\newtheorem{warn}[theo]{Warning}
\newenvironment{warning}[1][]
{\begin{warn}[#1]\begin{leftbar}}
{\end{leftbar}\end{warn}}
\newtheorem{conj}[theo]{Conjecture}
\newtheorem{exam}[theo]{Example}
\newenvironment{example}[1][]
{\begin{exam}[#1]\begin{leftbar}}
{\end{leftbar}\end{exam}}
\newenvironment{statement}{\begin{quote}}{\end{quote}}
\let\sumnonlimits\sum
\let\prodnonlimits\prod
\let\cupnonlimits\bigcup
\let\capnonlimits\bigcap
\renewcommand{\sum}{\sumnonlimits\limits}
\renewcommand{\prod}{\prodnonlimits\limits}
\renewcommand{\bigcup}{\cupnonlimits\limits}
\renewcommand{\bigcap}{\capnonlimits\limits}
\begin{document}

\title{On the principal minors of the powers of a matrix}
\author{Darij Grinberg\thanks{Drexel University, Korman Center, 15 S 33rd Street,
Philadelphia PA, 19104, USA}}
\date{30 June 2026}
\maketitle

\begin{abstract}
\textbf{Abstract.} We show that if $A$ is an $n\times n$-matrix, then the
diagonal entries of each power $A^{m}$ are uniquely determined by the
principal minors of $A$, and can be written as universal (integral)
polynomials in the latter. Furthermore, if the latter all equal $1$, then so
do the former. These results are inspired by Problem B5 on the Putnam contest
2021, and shed a new light on the behavior of minors under matrix multiplication.

\textbf{Keywords:} principal minors, determinantal identities, determinants,
matrices, determinantal ideals, Putnam contest.

\textbf{MSC2020 classes:} 11C20, 14M12, 15A15.

\end{abstract}

\section{Introduction}

Let $R$ be a commutative ring. Let $A$ be an $n\times n$-matrix over $R$,
where $n$ is a nonnegative integer.

A \emph{principal submatrix} of $A$ means a matrix obtained from $A$ by
removing some rows and the corresponding columns (i.e., removing the $i_{1}%
$-th, $i_{2}$-th, $\ldots$, $i_{k}$-th rows and the $i_{1}$-th, $i_{2}$-th,
$\ldots$, $i_{k}$-th columns for some choice of $k$ integers $i_{1}%
,i_{2},\ldots,i_{k}$ satisfying $1\leq i_{1}<i_{2}<\cdots<i_{k}\leq n$). In
particular, $A$ itself is a principal submatrix of $A$ (obtained for $k=0$).

A \emph{principal minor} of $A$ means the determinant of a principal submatrix
of $A$. In particular, each diagonal entry of $A$ is a principal minor of $A$
(being the determinant of a principal submatrix of size $1\times1$). In total,
$A$ has $2^{n}$ principal minors, including its own determinant $\det A$ as
well as the trivial principal minor $1$ (obtained as the determinant of a
$0\times0$ matrix, which is what remains when all rows and columns are removed).

Problem B5 on the \href{https://kskedlaya.org/putnam-archive/2021.pdf}{Putnam
contest 2021} asked for a proof of the following:

\begin{theorem}
\label{thm.putnam}Assume that $R=\mathbb{Z}$. Assume that each principal minor
of $A$ is odd. Then, each principal minor of $A^{m}$ is odd whenever $m$ is a
nonnegative integer.
\end{theorem}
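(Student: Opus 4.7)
Plan: The hypothesis and conclusion depend only on the entries of $A$ modulo $2$, so I reduce to working over $\mathbb{F}_{2}$, where the statement becomes: if $A \in \mathbb{F}_{2}^{n \times n}$ is \emph{very odd} (meaning every principal minor equals $1$), then so is $A^{m}$ for every $m \ge 0$. The cases $m=0$ and $m=1$ are immediate, so I would argue by induction on $m$. For the inductive step, I would apply the Cauchy--Binet formula to $A^{m+1}=A\cdot A^{m}$:
\[
\det\bigl((A^{m+1})_{S,S}\bigr) \;=\; \sum_{T\subseteq[n],\,|T|=|S|} \det(A_{S,T})\,\det\bigl((A^{m})_{T,S}\bigr).
\]
The $T=S$ term contributes $\det(A_{S})\det((A^{m})_{S}) \equiv 1 \pmod{2}$ by the two inductive hypotheses, so it remains to show that the off-diagonal sum $\sum_{T\ne S}\det(A_{S,T})\det((A^{m})_{T,S})$ vanishes modulo $2$.

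This off-diagonal cancellation is the main obstacle: the summands $\det(A_{S,T})$ for $T\ne S$ are non-principal minors and are not directly controlled by the hypothesis, so some global identity must be invoked. My plan is to exploit the generating-function reformulation of "very odd",
\[
\det\bigl(I_{S}+tA_{S}\bigr) \;=\; \sum_{T\subseteq S}t^{|T|}\det(A_{T}) \;\equiv\; (1+t)^{|S|}\pmod{2}
\]
for every $S$, combined with the analogous identity for $A^{m}$ (inductive hypothesis). In the formal power-series ring $\mathbb{F}_{2}[[t]]$ one has $(I-tA)^{-1}=\sum_{k\ge 0}t^{k}A^{k}$, and Jacobi's identity for principal minors of an inverse then yields the packaged statement $\det\bigl((I-tA)^{-1}_{S}\bigr) \equiv (1+t)^{-|S|} \pmod{2}$, encoding information about \emph{all} powers of $A$ simultaneously. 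Extracting the desired cancellation for each individual $m$ from this packaged identity is the technical heart of the argument.

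A parallel structural approach is worth mentioning: show that, over $\mathbb{F}_{2}$, every very odd matrix is permutation-conjugate to an upper-triangular matrix with $1$'s on the diagonal. Since such "upper unitriangular" matrices form a group and each has all principal minors equal to $1$, the theorem would follow at once (principal minors being preserved by simultaneous row/column permutations). The structural claim reduces to showing that the digraph $G$ on $[n]$ with edges $\{(i,j):i\ne j,\ A_{ij}=1\}$ is acyclic. Noting that $B:=A+I$ has all nonempty principal minors equal to $0$ modulo $2$ (by M\"obius inversion on the Boolean lattice) and that over $\mathbb{F}_{2}$ the determinant equals the permanent, one obtains $\det(B_{C})\equiv \#\{\text{Hamiltonian cycles of }G[C]\}\pmod{2}$ for the vertex set $C$ of a shortest directed cycle. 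A symmetric-difference argument on pairs of Hamiltonian cycles, combined with the girth assumption, would force this count to be odd---contradicting $\det(B_{C}) \equiv 0$---though making this combinatorial argument airtight is itself delicate.
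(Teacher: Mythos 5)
Two framing facts first: the paper does not actually prove Theorem \ref{thm.putnam} at all --- it defers to the official solution \cite{Kedlaya, Putnam-official-2021} and to \cite[Lemma 3.3]{MasPan}, and its own arguments (Theorems \ref{thm.main2}, \ref{thm.main3}) cover only the diagonal entries of $A^{m}$. Your second, ``structural'' route is precisely that known proof, so there is no methodological divergence to report; the problem is that, as written, neither of your two routes is carried to completion, and the first one has a gap that is not merely technical. In the Cauchy--Binet induction you carry only the hypotheses ``all principal minors of $A$ are $1$'' and ``all principal minors of $A^{m}$ are $1$'' over $\mathbb{F}_{2}$ and hope the off-diagonal terms cancel. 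They cannot be made to cancel from those hypotheses alone: the product of two matrices each having all principal minors $1$ need not have all principal minors $1$. For instance, over $\mathbb{F}_{2}$ the matrices $\left(\begin{smallmatrix}1&1\\0&1\end{smallmatrix}\right)$ and $\left(\begin{smallmatrix}1&0\\1&1\end{smallmatrix}\right)$ are both ``very odd'', yet their product $\left(\begin{smallmatrix}0&1\\1&1\end{smallmatrix}\right)$ has a diagonal entry $0$. So any successful argument must exploit that the two factors are powers of one and the same matrix, which your inductive set-up throws away; and the ``packaged'' Jacobi identity does not obviously recover the individual powers, since the determinant of $\sum_{k\geq 0}t^{k}\operatorname*{sub}\nolimits_{S}^{S}\left(A^{k}\right)$ is not the sum of the determinants (compare Example \ref{exa.CD}, which exhibits the analogous failure for honest integer matrices).

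Your second route is sound and can be closed, but the step you label ``delicate'' is exactly where a proof is still owed --- and it is simpler than a symmetric-difference argument on pairs of Hamiltonian cycles. You correctly obtain $\det\left(\operatorname*{sub}\nolimits_{S}^{S}\left(A+I_{n}\right)\right)\equiv\sum_{T\subseteq S}\det\left(\operatorname*{sub}\nolimits_{T}^{T}A\right)=2^{\left\vert S\right\vert}\equiv 0\pmod{2}$ for every nonempty $S$, and (since the diagonal of $A+I_{n}$ vanishes mod $2$) that this determinant counts, mod $2$, the loop-free cycle covers of the digraph $G$ restricted to $S$. Now let $C$ be the vertex set of a shortest directed cycle of $G$. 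Minimality forces $G\left[C\right]$ to have no chords: a chord $v_{a}\to v_{b}$ with $b$ not the successor of $a$ would, together with the arc of the cycle from $v_{b}$ back to $v_{a}$, form a directed cycle of length $<\left\vert C\right\vert$. Hence the only edges of $G\left[C\right]$ are the cycle edges, so its unique loop-free cycle cover is the cycle itself (a cover by two or more cycles would likewise contain a cycle of length $<\left\vert C\right\vert$), and the count is exactly $1$, contradicting the vanishing above. Therefore $G$ is acyclic; a topological order gives a permutation matrix $P$ with $PAP^{-1}$ unipotent upper triangular mod $2$, whence $PA^{m}P^{-1}=\left(PAP^{-1}\right)^{m}$ is again unipotent upper triangular, and since permutation conjugation merely permutes principal minors, every principal minor of $A^{m}$ is $1$ mod $2$. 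With that insertion your second route becomes a complete proof (essentially the cited one); as submitted, however, the proposal stops short of proving the theorem.
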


Without giving the solution away, it shall be noticed that essentially only
one proof is known (see \cite{Kedlaya} or \cite{Putnam-official-2021} for it),
and it is not as algebraic as the statement of Theorem \ref{thm.putnam} might
suggest. In particular, it is unclear if the theorem remains valid if
\textquotedblleft odd\textquotedblright\ is replaced by \textquotedblleft
congruent to $1$ modulo $4$\textquotedblright, or if $R$ is replaced by
another ring; the official solution (most of which originates in a result by
Dobrinskaya \cite[Lemma 3.3]{MasPan}) certainly does not apply to such
extensions. An approach that is definitely doomed is to try expressing the
principal minors of a power $A^{m}$ in terms of those of $A$. The following
example shows that the latter do not uniquely determine the former:

\begin{example}
\label{exa.CD}Set%
\[
C:=%
\begin{pmatrix}
a & b & 1 & 1\\
c & d & 1 & 1\\
1 & 1 & p & q\\
1 & 1 & r & s
\end{pmatrix}
\ \ \ \ \ \ \ \ \ \ \text{and}\ \ \ \ \ \ \ \ \ \ D:=%
\begin{pmatrix}
a & b & 1 & 1\\
c & d & 1 & 1\\
1 & 1 & p & r\\
1 & 1 & q & s
\end{pmatrix}
\]
for some $a,b,c,d,p,q,r,s\in R$. Then, the matrices $C$ and $D$ have the same
principal minors, but their squares $C^{2}$ and $D^{2}$ differ in their
$\left\{  2,3\right\}  $-principal minor (i.e., their principal minor obtained
by removing the $1$-st and $4$-th rows and columns) unless $\left(
q-r\right)  \left(  b-c\right)  =0$. Thus, the principal minors of the square
of a matrix are not uniquely determined by the principal minors of the matrix itself.
\end{example}

This example is inspired by \cite[Example 3]{HarLoe}, where further related
discussion of matrices with equal principal minors can be found.

\section{Nevertheless...}

However, not all is lost. Among the principal minors of $A^{m}$, the simplest
ones (besides $1$) are those of size $1\times1$, that is, the diagonal entries
of $A^{m}$. It turns out that these diagonal entries are indeed uniquely
determined by the principal minors of $A$, and even better, they can be
written as universal polynomials\footnote{A \textquotedblleft universal
polynomial\textquotedblright\ means a polynomial with integer coefficients
that depends neither on $A$ nor on $R$ (but can depend on $m$ as well as on
the location of the diagonal entry).} in the latter. That is, we have the
following:\footnote{An \emph{integer polynomial} means a polynomial with
integer coefficients.}

\begin{theorem}
\label{thm.main1}Let $n$ and $m$ be nonnegative integers, and let
$i\in\left\{  1,2,\ldots,n\right\}  $. Then, there exists an integer
polynomial $P_{n,i,m}$ in $2^{n}$ indeterminates that is independent of $R$
and $A$, and that has the following property: If $A$ is any $n\times n$-matrix
over any commutative ring $R$, then the $i$-th diagonal entry of $A^{m}$ can
be obtained by substituting the principal minors of $A$ into $P_{n,i,m}$. In
particular, the principal minors of $A$ uniquely determine this entry.
\end{theorem}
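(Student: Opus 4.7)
My plan is to identify the generating function $\sum_{m \geq 0} (A^m)_{i,i}\, t^m$ with a quotient of two determinants whose expansions are visibly $\mathbb{Z}$-linear combinations of principal minors of $A$; extracting the coefficient of $t^m$ will then exhibit $(A^m)_{i,i}$ as an integer polynomial in the principal minors.

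The first step is the tautology that, inside $\operatorname{Mat}_n(R[[t]])$, the matrix $I_n - tA$ is invertible with inverse $\sum_{m \geq 0} t^m A^m$, so
\[
\bigl((I_n - tA)^{-1}\bigr)_{i,i} = \sum_{m \geq 0} (A^m)_{i,i}\, t^m .
\]
The second step is to compute the same entry by Cramer's rule: since $\det(I_n - tA)$ has constant term $1$ and is therefore a unit of $R[[t]]$, we have $(I_n - tA)^{-1} = \frac{1}{\det(I_n - tA)}\operatorname{adj}(I_n - tA)$ over $R[[t]]$, and the $(i,i)$-entry of the adjugate is the $(i,i)$-cofactor, which (with sign $(-1)^{2i}=1$) equals the determinant of the matrix obtained from $I_n - tA$ by deleting the $i$-th row and column, namely $\det\bigl(I_{n-1} - tA_{[n]\setminus\{i\}}\bigr)$, where $A_S$ denotes the principal submatrix of $A$ on rows and columns indexed by $S \subseteq \{1, \ldots, n\}$. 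Combining the two gives the key identity
\[
\sum_{m \geq 0} (A^m)_{i,i}\, t^m \;=\; \frac{\det\bigl(I_{n-1} - t A_{[n] \setminus \{i\}}\bigr)}{\det(I_n - tA)} \qquad \text{in } R[[t]] .
\]

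The third step is routine coefficient extraction. Both $p(t) := \det(I_n - tA)$ and $q(t) := \det\bigl(I_{n-1} - tA_{[n] \setminus \{i\}}\bigr)$ expand via the standard formula $\det(I - tB) = \sum_{k}(-t)^k \sum_{|S|=k}\det(B_S)$ into polynomials in $t$ whose coefficients are $\mathbb{Z}$-linear combinations, depending only on $n$ and $i$, of principal minors of $A$. Writing $p(t) = 1 + p_1 t + p_2 t^2 + \cdots$ (with $p_j = 0$ for $j > n$) and $q(t) = q_0 + q_1 t + \cdots$, the identity $q(t) = p(t) \cdot \sum_m (A^m)_{i,i} t^m$ and comparison of $t^m$-coefficients yield the recursion
\[
(A^m)_{i,i} \;=\; q_m \;-\; \sum_{j=1}^{m} p_j\, (A^{m-j})_{i,i} .
\]
A straightforward induction on $m$, starting from $(A^0)_{i,i} = 1$, then expresses $(A^m)_{i,i}$ as an integer polynomial in the $p_j$'s and $q_k$'s, and hence, after substitution, as a universal integer polynomial $P_{n,i,m}$ in the $2^n$ principal minors of $A$. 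The only genuine insight lies in the Cramer's-rule step; working entirely inside $R[[t]]$ sidesteps the inversion and algebraic-closure issues that an eigenvalue- or diagonalization-based argument would have to contend with over a general commutative ring, so there is no real obstacle thereafter.
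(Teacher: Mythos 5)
Your proposal is correct, and its computational core is exactly the engine of the paper's proof of Theorem \ref{thm.main2}: the identity $\sum_{m\geq 0}\left(A^{m}\right)_{i,i}t^{m}=\det\left(I_{n-1}-tA_{\sim i,\sim i}\right)\diagup\det\left(I_{n}-tA\right)$ in $R\left[\left[t\right]\right]$, obtained from the Neumann series for $\left(I_{n}-tA\right)^{-1}$ and the adjugate, with both determinants expanded into principal minors of $A$ (the minors of $A_{\sim i,\sim i}$ being exactly those $\det\left(\operatorname{sub}\nolimits_{S}^{S}A\right)$ with $i\notin S$). Where you genuinely diverge is in how the \emph{universal} polynomial $P_{n,i,m}$ is produced. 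The paper first proves only the ring-theoretic statement that $\left(A^{m}\right)_{i,i}$ lies in the subring generated by the principal minors (Theorem \ref{thm.main2}), and then manufactures a single polynomial independent of $R$ and $A$ by applying that statement to the generic matrix over $\mathbb{Z}\left[x_{i,j}\right]$ and specializing along the $\mathbb{Z}$-algebra homomorphism $x_{i,j}\mapsto A_{i,j}$. You instead extract coefficients directly via the recursion $\left(A^{m}\right)_{i,i}=q_{m}-\sum_{j=1}^{m}p_{j}\left(A^{m-j}\right)_{i,i}$ (valid since $p_{0}=q_{0}=1$), whose unwinding is one and the same integer polynomial in the $p_{j}$ and $q_{k}$ for every $A$ and every $R$; since $p_{j}$ and $q_{k}$ are themselves universal $\mathbb{Z}$-linear combinations of the $2^{n}$ principal minors (depending only on $n$ and $i$), universality comes for free and the generic-matrix specialization becomes unnecessary. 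Your route is thus more direct and constructive for Theorem \ref{thm.main1} itself, and even yields an explicit (if unwieldy) formula for $P_{n,i,m}$; the paper's detour buys the standalone subring statement of Theorem \ref{thm.main2}, which is then reused for the variants (Theorems \ref{thm.main2off} and \ref{thm.main3}) and encapsulates the specialization trick once and for all. If you write your argument up formally, the only steps to spell out are the induction showing that the recursion's unwinding is a fixed integer polynomial in $p_{1},\ldots,p_{m},q_{0},\ldots,q_{m}$ depending only on $m$, and the identification of the principal minors of $A_{\sim i,\sim i}$ with principal minors of $A$ (the relabeling-of-indices argument the paper carries out around (\ref{sol.adjBii=.1})).
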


Let us verify this for $m=2$: If we denote the $\left(  i,j\right)  $-th entry
of a matrix $B$ by $B_{i,j}$, then each diagonal entry of $A^{2}$ has the form%
\begin{align*}
\left(  A^{2}\right)  _{i,i}  &  =\sum_{j=1}^{n}A_{i,j}A_{j,i}=A_{i,i}%
^{2}+\sum_{j\neq i}\underbrace{A_{i,j}A_{j,i}}_{=A_{i,i}A_{j,j}-\det\left(
\begin{array}
[c]{cc}%
A_{i,i} & A_{i,j}\\
A_{j,i} & A_{j,j}%
\end{array}
\right)  }\\
&  =A_{i,i}^{2}+\sum_{j\neq i}\left(  A_{i,i}A_{j,j}-\det\left(
\begin{array}
[c]{cc}%
A_{i,i} & A_{i,j}\\
A_{j,i} & A_{j,j}%
\end{array}
\right)  \right)  ,
\end{align*}
which is visibly an integer polynomial in the principal minors of $A$ (since
all the $A_{i,i}$ and $A_{j,j}$ and $\det\left(
\begin{array}
[c]{cc}%
A_{i,i} & A_{i,j}\\
A_{j,i} & A_{j,j}%
\end{array}
\right)  $ are principal minors of $A$). This verifies Theorem \ref{thm.main1}
for $m=2$. Such explicit computations remain technically possible for higher
values of $m$, but become longer and more cumbersome as $m$ increases.

The goal of this note is to prove Theorem \ref{thm.main1}. We will first show
the following theorem, which looks weaker but is essentially equivalent:

\begin{theorem}
\label{thm.main2}Let $n$ and $m$ be nonnegative integers. Let $R$ be a
commutative ring. Let $A$ be an $n\times n$-matrix over $R$. Let $\mathcal{P}$
be the subring of $R$ generated by all principal minors of $A$. Then, all
diagonal entries of $A^{m}$ belong to $\mathcal{P}$.
\end{theorem}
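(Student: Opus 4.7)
The plan is to package all the diagonal entries $(A^{m})_{i,i}$ (for a fixed $i$ and varying $m$) into a single generating function and to identify that generating function with an explicit rational function whose numerator and denominator are built from principal minors of $A$. Working inside the formal power series ring $R[[t]]$, the matrix $I_{n}-tA$ is a unit (its determinant has constant term $1$), and its inverse is simply the geometric series $\sum_{m\geq 0} t^{m}A^{m}$. Hence, reading off the $(i,i)$-entry,
\[
\sum_{m\geq 0}(A^{m})_{i,i}\, t^{m} \;=\; \bigl((I_{n}-tA)^{-1}\bigr)_{i,i}.
\]
The adjugate identity $M\cdot\operatorname{adj}(M)=\det(M)\cdot I$, being a universal polynomial identity in the entries of $M$, holds in $R[[t]]^{n\times n}$ with $M=I_{n}-tA$. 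Taking the $(i,i)$-entry of both sides and using that the $(i,i)$-cofactor of $I_{n}-tA$ is $\det(I_{n-1}-tA_{\widehat{i}})$, this yields the resolvent formula
\[
\det(I_{n}-tA)\cdot\sum_{m\geq 0}(A^{m})_{i,i}\, t^{m} \;=\; \det\bigl(I_{n-1}-tA_{\widehat{i}}\bigr),
\]
where $A_{\widehat{i}}$ denotes the principal submatrix of $A$ obtained by deleting the $i$-th row and column.

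Next I would observe that both sides are polynomials in $t$ whose coefficients already lie in $\mathcal{P}$. Expanding the characteristic polynomial in the familiar way gives $\det(I_{n}-tA)=\sum_{k=0}^{n}(-1)^{k}t^{k}e_{k}(A)$, where $e_{k}(A)=\sum_{|S|=k}\det(A_{S})$ is a sum of $k\times k$ principal minors of $A$; by the very definition of $\mathcal{P}$, each $e_{k}(A)$ lies in $\mathcal{P}$. The analogous expansion of $\det(I_{n-1}-tA_{\widehat{i}})$ produces sums of principal minors of $A$ that avoid the index $i$, which again lie in $\mathcal{P}$. Thus both polynomials belong to $\mathcal{P}[t]$.

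To conclude, note that the denominator $\det(I_{n}-tA)$ has constant term $1$, so it is a unit in $\mathcal{P}[[t]]$ (its inverse is defined by the usual coefficient recursion, which stays inside $\mathcal{P}$). Multiplying through, the quotient $\det(I_{n-1}-tA_{\widehat{i}})/\det(I_{n}-tA)$ lies in $\mathcal{P}[[t]]$, and extracting the coefficient of $t^{m}$ yields $(A^{m})_{i,i}\in\mathcal{P}$ for every $m\geq 0$, which is precisely Theorem \ref{thm.main2}. The one delicate point — and the only real obstacle — is justifying the resolvent formula over an arbitrary commutative ring, where inversion and fractions are not a priori available; I would handle this by working entirely inside $R[[t]]$, in which $I_{n}-tA$ really is invertible, and by invoking the adjugate identity as a universal polynomial identity. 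Once the formula is in place, the rest of the argument is purely formal and avoids any induction on $m$ or $n$.
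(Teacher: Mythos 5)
Your proposal is correct and follows essentially the same route as the paper's own proof: the generating function $\sum_{m\geq 0}(A^{m})_{i,i}t^{m}=\bigl((I_{n}-tA)^{-1}\bigr)_{i,i}$, the adjugate/cofactor identity giving $\det(I_{n-1}-tA_{\widehat{i}})$ as numerator, the expansion of both determinants into principal minors of $A$ (hence coefficients in $\mathcal{P}$), and invertibility of the denominator in $\mathcal{P}[[t]]$ thanks to its constant term $1$. The only cosmetic difference is that you group the minors into the sums $e_{k}(A)$ while the paper keeps them separate and spells out why the principal minors of $A_{\widehat{i}}$ are principal minors of $A$; the substance is identical.
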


Before we prove this, let us explain how Theorem \ref{thm.main1} can be easily
derived from Theorem \ref{thm.main2}:

\begin{proof}
[Proof of Theorem \ref{thm.main1} using Theorem \ref{thm.main2}.]The notation
$B_{i,j}$ shall denote the $\left(  i,j\right)  $-th entry of any matrix $B$.

Let $\mathbf{R}$ be the polynomial ring $\mathbb{Z}\left[  x_{i,j}%
\ \mid\ 1\leq i\leq n\text{ and }1\leq j\leq n\right]  $ in $n^{2}$
independent indeterminates $x_{i,j}$ over $\mathbb{Z}$. (For instance, if
$n=2$, then $\mathbf{R}=\mathbb{Z}\left[  x_{1,1},\ x_{1,2},\ x_{2,1}%
,\ x_{2,2}\right]  $.) Let $\mathbf{A}$ be the $n\times n$-matrix over
$\mathbf{R}$ whose $\left(  i,j\right)  $-th entry is $x_{i,j}$ for each
$\left(  i,j\right)  \in\left\{  1,2,\ldots,n\right\}  ^{2}$. This matrix
$\mathbf{A}$ is known as \textquotedblleft the general $n\times n$%
-matrix\textquotedblright, since any matrix $A$ over any commutative ring can
be obtained from it by substituting appropriate elements (viz., the entries of
$A$) for the variables $x_{i,j}$. This very property will be crucial to the
argument that follows.

Let $\mathbf{p}_{1},\mathbf{p}_{2},\ldots,\mathbf{p}_{2^{n}}$ be the $2^{n}$
principal minors of $\mathbf{A}$ (numbered in some order). Let $\mathcal{P}$
denote the subring of $\mathbf{R}$ generated by all these principal minors of
$\mathbf{A}$. Theorem \ref{thm.main2} (applied to $\mathbf{R}$ and
$\mathbf{A}$ instead of $R$ and $A$) shows that all diagonal entries of
$\mathbf{A}^{m}$ belong to $\mathcal{P}$. In other words, for each
$i\in\left\{  1,2,\ldots,n\right\}  $, we have $\left(  \mathbf{A}^{m}\right)
_{i,i}\in\mathcal{P}$.

Fix $i\in\left\{  1,2,\ldots,n\right\}  $. As we just showed, we have $\left(
\mathbf{A}^{m}\right)  _{i,i}\in\mathcal{P}$. In other words, there exists an
integer polynomial $P_{n,i,m}$ in $2^{n}$ indeterminates such that $\left(
\mathbf{A}^{m}\right)  _{i,i}=P_{n,i,m}\left(  \mathbf{p}_{1},\mathbf{p}%
_{2},\ldots,\mathbf{p}_{2^{n}}\right)  $ (since $\mathcal{P}$ is the subring
of $\mathbf{R}$ generated by $\mathbf{p}_{1},\mathbf{p}_{2},\ldots
,\mathbf{p}_{2^{n}}$). Consider this polynomial $P_{n,i,m}$; note that it is
independent of $R$ and $A$ (by its very construction).

Now, consider a commutative ring $R$ and an $n\times n$-matrix $A$ over $R$.
Let $p_{1},p_{2},\ldots,p_{2^{n}}$ be the $2^{n}$ principal minors of $A$
(numbered in the same order as $\mathbf{p}_{1},\mathbf{p}_{2},\ldots
,\mathbf{p}_{2^{n}}$). Let $f:\mathbf{R}\rightarrow R$ be the $\mathbb{Z}%
$-algebra homomorphism that sends each indeterminate $x_{i,j}$ to the $\left(
i,j\right)  $-th entry $A_{i,j}$ of $A$. This homomorphism $f$ therefore sends
each entry of the matrix $\mathbf{A}$ to the corresponding entry of $A$, and
thus also sends each principal minor of $\mathbf{A}$ to the corresponding
principal minor of $A$ (since a principal minor is a certain signed sum of
products of entries of the matrix). In other words,%
\begin{equation}
f\left(  \mathbf{p}_{j}\right)  =p_{j}\ \ \ \ \ \ \ \ \ \ \text{for each }%
j\in\left\{  1,2,\ldots,2^{n}\right\}  . \label{pf.thm.main1.1}%
\end{equation}

However, $P_{n,i,m}$ is an integer polynomial, and thus \textquotedblleft
commutes\textquotedblright\ with any $\mathbb{Z}$-algebra homomorphism --
i.e., if $a_{1},a_{2},\ldots,a_{2^{n}}$ are any $2^{n}$ elements of a
commutative ring, and if $g$ is any $\mathbb{Z}$-algebra homomorphism out of
that ring, then%
\[
g\left(  P_{n,i,m}\left(  a_{1},a_{2},\ldots,a_{2^{n}}\right)  \right)
=P_{n,i,m}\left(  g\left(  a_{1}\right)  ,\ g\left(  a_{2}\right)
,\ \ldots,\ g\left(  a_{2^{n}}\right)  \right)  .
\]
Applying this to $a_{i}=\mathbf{p}_{i}$ and $g=f$, we obtain%
\begin{align*}
f\left(  P_{n,i,m}\left(  \mathbf{p}_{1},\mathbf{p}_{2},\ldots,\mathbf{p}%
_{2^{n}}\right)  \right)   &  =P_{n,i,m}\left(  f\left(  \mathbf{p}%
_{1}\right)  ,\ f\left(  \mathbf{p}_{2}\right)  ,\ \ldots,\ f\left(
\mathbf{p}_{2^{n}}\right)  \right) \\
&  =P_{n,i,m}\left(  p_{1},p_{2},\ldots,p_{2^{n}}\right)
\ \ \ \ \ \ \ \ \ \ \left(  \text{by (\ref{pf.thm.main1.1})}\right)  .
\end{align*}
In view of $\left(  \mathbf{A}^{m}\right)  _{i,i}=P_{n,i,m}\left(
\mathbf{p}_{1},\mathbf{p}_{2},\ldots,\mathbf{p}_{2^{n}}\right)  $, we can
rewrite this as%
\begin{equation}
f\left(  \left(  \mathbf{A}^{m}\right)  _{i,i}\right)  =P_{n,i,m}\left(
p_{1},p_{2},\ldots,p_{2^{n}}\right)  . \label{pf.thm.main1.2}%
\end{equation}

However, the $\mathbb{Z}$-algebra homomorphism $f$ sends each entry of the
matrix $\mathbf{A}$ to the corresponding entry of $A$, and therefore also
sends each entry of the matrix $\mathbf{A}^{m}$ to the corresponding entry of
$A^{m}$ (since the entries of $\mathbf{A}^{m}$ are certain sums of products of
entries of $\mathbf{A}$, whereas the entries of $A^{m}$ are the same sums of
products of entries of $A$). In other words, $f\left(  \left(  \mathbf{A}%
^{m}\right)  _{u,v}\right)  =\left(  A^{m}\right)  _{u,v}$ for any
$u,v\in\left\{  1,2,\ldots,n\right\}  $. Thus, in particular, $f\left(
\left(  \mathbf{A}^{m}\right)  _{i,i}\right)  =\left(  A^{m}\right)  _{i,i}$.
Comparing this with (\ref{pf.thm.main1.2}), we obtain $\left(  A^{m}\right)
_{i,i}=P_{n,i,m}\left(  p_{1},p_{2},\ldots,p_{2^{n}}\right)  $. In other
words, the $i$-th diagonal entry of $A^{m}$ can be obtained by substituting
the principal minors of $A$ into $P_{n,i,m}$ (since $p_{1},p_{2}%
,\ldots,p_{2^{n}}$ are these principal minors of $A$). This proves Theorem
\ref{thm.main1}.
\end{proof}

\section{Notations}

In order to prove Theorem \ref{thm.main2}, we will need some more notations
regarding matrices and their minors:

\begin{itemize}
\item If $m\in\mathbb{Z}$, then $\left[  m\right]  $ shall denote the set
$\left\{  1,2,\ldots,m\right\}  $. (This set is empty if $m\leq0$.)

\item If $B$ is a $u\times v$-matrix and if $i\in\left[  u\right]  $ and
$j\in\left[  v\right]  $, then $B_{i,j}$ shall denote the $\left(  i,j\right)
$-th entry of $B$.

\item If $u$ and $v$ are two nonnegative integers, and if $a_{i,j}$ is an
element of a ring for each $i\in\left[  u\right]  $ and $j\in\left[  v\right]
$, then the notation $\left(  a_{i,j}\right)  _{1\leq i\leq u,\ 1\leq j\leq
v}$ means the $u\times v$-matrix whose $\left(  i,j\right)  $-th entry is
$a_{i,j}$ for all $i\in\left[  u\right]  $ and $j\in\left[  v\right]  $.

\item If $B$ is a $u\times v$-matrix, and if $\left(  i_{1},i_{2},\ldots
,i_{p}\right)  \in\left[  u\right]  ^{p}$ and $\left(  j_{1},j_{2}%
,\ldots,j_{q}\right)  \in\left[  v\right]  ^{q}$ are two sequences of
integers, then $\operatorname*{sub}\nolimits_{i_{1},i_{2},\ldots,i_{p}}%
^{j_{1},j_{2},\ldots,j_{q}}B$ shall denote the $p\times q$-matrix $\left(
B_{i_{x},j_{y}}\right)  _{1\leq x\leq p,\ 1\leq y\leq q}$. If $i_{1}%
<i_{2}<\cdots<i_{p}$ and $j_{1}<j_{2}<\cdots<j_{q}$, then this matrix is a
submatrix of $B$.

\item If $B$ is a $u\times v$-matrix, and if $I$ is a subset of $\left[
u\right]  $, and if $J$ is a subset of $\left[  v\right]  $, then
$\operatorname*{sub}\nolimits_{I}^{J}B$ shall denote the submatrix
$\operatorname*{sub}\nolimits_{i_{1},i_{2},\ldots,i_{p}}^{j_{1},j_{2}%
,\ldots,j_{q}}B$ of $B$, where $i_{1},i_{2},\ldots,i_{p}$ are the elements of
$I$ in increasing order, and where $j_{1},j_{2},\ldots,j_{q}$ are the elements
of $J$ in increasing order.

Thus, in particular, if $B$ is an $n\times n$-matrix, and if $I$ is a subset
of $\left[  n\right]  $, then $\operatorname*{sub}\nolimits_{I}^{I}B$ is a
principal submatrix of $B$, so that $\det\left(  \operatorname*{sub}%
\nolimits_{I}^{I}B\right)  $ is a principal minor of $B$.

\item If $B$ is an $n\times n$-matrix, and if $i,j\in\left[  n\right]  $, then
$B_{\sim i,\sim j}$ shall denote the submatrix of $B$ obtained by removing the
$i$-th row and the $j$-th column from $B$. In other words, $B_{\sim i,\sim j}$
denotes the matrix $\operatorname*{sub}\nolimits_{\left[  n\right]
\setminus\left\{  i\right\}  }^{\left[  n\right]  \setminus\left\{  j\right\}
}B$.

\item If $B$ is an $n\times n$-matrix, then $\operatorname*{adj}B$ shall mean
the \emph{adjugate matrix} of $B$. This is defined as the $n\times n$-matrix
$\left(  \left(  -1\right)  ^{i+j}\det\left(  B_{\sim j,\sim i}\right)
\right)  _{1\leq i\leq n,\ 1\leq j\leq n}$.

\item If $m$ is a nonnegative integer, then $I_{m}$ denotes the $m\times m$
identity matrix.
\end{itemize}

We will need the following properties of determinants:

\begin{itemize}
\item For any $n\times n$-matrix $B$, we have%
\begin{equation}
B\cdot\left(  \operatorname*{adj}B\right)  =\left(  \operatorname*{adj}%
B\right)  \cdot B=\left(  \det B\right)  \cdot I_{n}. \label{eq.adj.BadjB}%
\end{equation}
(This is the main property of adjugates; see, e.g., \cite[Theorem
6.100]{detnotes} for a proof.)

\item For any commutative ring $S$, any $m\times m$-matrix $B$ and any element
$x\in S$, we have%
\begin{equation}
\det\left(  B+xI_{m}\right)  =\sum_{P\subseteq\left[  m\right]  }\det\left(
\operatorname*{sub}\nolimits_{P}^{P}B\right)  \cdot x^{m-\left\vert
P\right\vert }. \label{eq.det(B+xI)0}%
\end{equation}
(This is a folklore result -- essentially the explicit formula for the
characteristic polynomial of a matrix in terms of its principal minors. The
proof is straightforward: Expand the left hand side into a sum of products,
and combine products according to \textquotedblleft which factors come from
$B$ and which factors come from $xI_{m}$\textquotedblright. See
\cite[Proposition 6.4.29]{21s} or \cite[Corollary 6.164]{detnotes} for
detailed proofs.)
\end{itemize}

For any ring $S$, we consider the univariate polynomial ring $S\left[
t\right]  $ as well as the ring $S\left[  \left[  t\right]  \right]  $ of
formal power series. Of course, $S\left[  t\right]  $ is a subring of
$S\left[  \left[  t\right]  \right]  $. Note that the ring $S$ need not be
commutative for $S\left[  t\right]  $ and $S\left[  \left[  t\right]  \right]
$ to be defined.

\section{Proof of Theorem \ref{thm.main2}}

We now finally turn to the proof of Theorem \ref{thm.main2}.

\begin{proof}
[Proof of Theorem \ref{thm.main2}.]We must prove that all diagonal entries of
$A^{m}$ belong to $\mathcal{P}$. In other words, we must prove that $\left(
A^{m}\right)  _{i,i}\in\mathcal{P}$ for each $i\in\left[  n\right]  $.

It is well-known that a polynomial over a matrix ring is \textquotedblleft
essentially the same as\textquotedblright\ a matrix with polynomial entries.
In other words, we can identify the ring $R^{n\times n}\left[  t\right]  $
with the ring $\left(  R\left[  t\right]  \right)  ^{n\times n}$ using a
straightforward ring isomorphism (which sends each $\sum_{i\geq0}C_{i}t^{i}\in
R^{n\times n}\left[  t\right]  $ to $\sum_{i\geq0}C_{i}t^{i}\in\left(
R\left[  t\right]  \right)  ^{n\times n}$). In the same way, we identify the
ring $R^{n\times n}\left[  \left[  t\right]  \right]  $ with the ring $\left(
R\left[  \left[  t\right]  \right]  \right)  ^{n\times n}$.

Let $B$ be the matrix $I_{n}-tA$ in the power series ring $R^{n\times
n}\left[  \left[  t\right]  \right]  $. This matrix $B=I_{n}-tA$ is
invertible, and its inverse is%
\begin{equation}
B^{-1}=I_{n}+tA+t^{2}A^{2}+t^{3}A^{3}+\cdots. \label{eq.In-tA.inv}%
\end{equation}
(This can be proved by directly verifying that $I_{n}+tA+t^{2}A^{2}+t^{3}%
A^{3}+\cdots$ is inverse to $I_{n}-tA$. Indeed, both products $\left(
I_{n}+tA+t^{2}A^{2}+t^{3}A^{3}+\cdots\right)  \cdot\left(  I_{n}-tA\right)  $
and $\left(  I_{n}-tA\right)  \cdot\left(  I_{n}+tA+t^{2}A^{2}+t^{3}%
A^{3}+\cdots\right)  $ turn, upon expanding, into sums that telescope to
$I_{n}$.)

Since the matrix $B$ is invertible, its determinant $\det B$ is invertible as
well (since $\left(  \det B\right)  \cdot\left(  \det\left(  B^{-1}\right)
\right)  =\det\left(  \underbrace{BB^{-1}}_{=I_{n}}\right)  =\det\left(
I_{n}\right)  =1$).

Now, recall that we must prove that $\left(  A^{m}\right)  _{i,i}%
\in\mathcal{P}$ for each $i\in\left[  n\right]  $. So let us fix $i\in\left[
n\right]  $. Then, (\ref{eq.In-tA.inv}) yields%
\begin{align*}
\left(  B^{-1}\right)  _{i,i}  &  =\left(  I_{n}+tA+t^{2}A^{2}+t^{3}%
A^{3}+\cdots\right)  _{i,i}\\
&  =\left(  I_{n}\right)  _{i,i}+tA_{i,i}+t^{2}\left(  A^{2}\right)
_{i,i}+t^{3}\left(  A^{3}\right)  _{i,i}+\cdots.
\end{align*}
Hence, the $t^{m}$-coefficient of the power series $\left(  B^{-1}\right)
_{i,i}\in R\left[  \left[  t\right]  \right]  $ is $\left(  A^{m}\right)
_{i,i}$. Thus, in order to prove that $\left(  A^{m}\right)  _{i,i}%
\in\mathcal{P}$ (which is our goal), it suffices to show that all coefficients
of the power series $\left(  B^{-1}\right)  _{i,i}$ belong to $\mathcal{P}$.
In other words, it suffices to show that $\left(  B^{-1}\right)  _{i,i}%
\in\mathcal{P}\left[  \left[  t\right]  \right]  $. This is what we shall now show.

From (\ref{eq.adj.BadjB}), we obtain $B\cdot\left(  \operatorname*{adj}%
B\right)  =\left(  \operatorname*{adj}B\right)  \cdot B=\left(  \det B\right)
\cdot I_{n}$, so that
\[
B^{-1}=\dfrac{1}{\det B}\cdot\operatorname*{adj}B.
\]
Hence,%
\begin{equation}
\left(  B^{-1}\right)  _{i,i}=\dfrac{1}{\det B}\cdot\left(
\operatorname*{adj}B\right)  _{i,i}. \label{sol.detB-1ii=}%
\end{equation}
Our next goal is to show that both factors $\dfrac{1}{\det B}$ and $\left(
\operatorname*{adj}B\right)  _{i,i}$ on the right hand side of this equality
belong to $\mathcal{P}\left[  \left[  t\right]  \right]  $. This will then
entail that $\left(  B^{-1}\right)  _{i,i}\in\mathcal{P}\left[  \left[
t\right]  \right]  $ as well, and we will be done.

From $B=I_{n}-tA=-tA+1I_{n}$, we obtain
\begin{align}
\det B  &  =\det\left(  -tA+1I_{n}\right)  =\sum_{P\subseteq\left[  n\right]
}\det\left(  \underbrace{\operatorname*{sub}\nolimits_{P}^{P}\left(
-tA\right)  }_{=-t\operatorname*{sub}\nolimits_{P}^{P}A}\right)
\cdot\underbrace{1^{n-\left\vert P\right\vert }}_{=1}\nonumber\\
&  \ \ \ \ \ \ \ \ \ \ \ \ \ \ \ \ \ \ \ \ \left(
\begin{array}
[c]{c}%
\text{by (\ref{eq.det(B+xI)0}), applied to }n\text{, }R\left[  \left[
t\right]  \right]  \text{, }-tA\text{ and }1\\
\text{instead of }m\text{, }S\text{, }B\text{ and }x
\end{array}
\right) \nonumber\\
&  =\sum_{P\subseteq\left[  n\right]  }\underbrace{\det\left(
-t\operatorname*{sub}\nolimits_{P}^{P}A\right)  }_{=\left(  -t\right)
^{\left\vert P\right\vert }\det\left(  \operatorname*{sub}\nolimits_{P}%
^{P}A\right)  }\nonumber\\
&  =\sum_{P\subseteq\left[  n\right]  }\left(  -t\right)  ^{\left\vert
P\right\vert }\underbrace{\det\left(  \operatorname*{sub}\nolimits_{P}%
^{P}A\right)  }_{\substack{\in\mathcal{P}\\\text{(since }\det\left(
\operatorname*{sub}\nolimits_{P}^{P}A\right)  \text{ is}\\\text{a principal
minor of }A\text{)}}}\label{sol.detB=.1}\\
&  \in\mathcal{P}\left[  t\right]  \subseteq\mathcal{P}\left[  \left[
t\right]  \right]  .\nonumber
\end{align}
Thus, $\det B$ is a formal power series over $\mathcal{P}$. Moreover,
(\ref{sol.detB=.1}) shows that this power series has constant term $1$ (since
the only addend in the sum in (\ref{sol.detB=.1}) that contributes to the
constant term is the addend for $P=\varnothing$, but this addend is
$\underbrace{\left(  -t\right)  ^{\left\vert \varnothing\right\vert }}%
_{=1}\underbrace{\det\left(  \operatorname*{sub}\nolimits_{\varnothing
}^{\varnothing}A\right)  }_{\substack{=1\\\text{(since the }0\times
0\text{-matrix}\\\text{has determinant }1\text{)}}}=1$). Thus, this power
series is invertible in $\mathcal{P}\left[  \left[  t\right]  \right]  $.
Therefore,%
\begin{equation}
\dfrac{1}{\det B}\in\mathcal{P}\left[  \left[  t\right]  \right]  .
\label{sol.1/detB-in}%
\end{equation}

Now, recall the definition of an adjugate matrix. This definition yields%
\begin{align}
\left(  \operatorname*{adj}B\right)  _{i,i}  &  =\underbrace{\left(
-1\right)  ^{i+i}}_{=1}\det\left(  B_{\sim i,\sim i}\right)  =\det\left(
B_{\sim i,\sim i}\right) \nonumber\\
&  =\det\left(  \left(  -tA+1I_{n}\right)  _{\sim i,\sim i}\right)
\ \ \ \ \ \ \ \ \ \ \left(  \text{since }B=-tA+1I_{n}\right) \nonumber\\
&  =\det\left(  -tA_{\sim i,\sim i}+1I_{n-1}\right)
\ \ \ \ \ \ \ \ \ \ \left(  \text{since }\left(  -tA+1I_{n}\right)  _{\sim
i,\sim i}=-tA_{\sim i,\sim i}+1I_{n-1}\right) \nonumber\\
&  =\sum_{P\subseteq\left[  n-1\right]  }\det\left(
\underbrace{\operatorname*{sub}\nolimits_{P}^{P}\left(  -tA_{\sim i,\sim
i}\right)  }_{=-t\operatorname*{sub}\nolimits_{P}^{P}\left(  A_{\sim i,\sim
i}\right)  }\right)  \cdot\underbrace{1^{n-1-\left\vert P\right\vert }}%
_{=1}\nonumber\\
&  \ \ \ \ \ \ \ \ \ \ \ \ \ \ \ \ \ \ \ \ \left(
\begin{array}
[c]{c}%
\text{by (\ref{eq.det(B+xI)0}), applied to }n-1\text{, }R\left[  \left[
t\right]  \right]  \text{, }-tA_{\sim i,\sim i}\text{ and }1\\
\text{instead of }m\text{, }S\text{, }B\text{ and }x
\end{array}
\right) \nonumber\\
&  =\sum_{P\subseteq\left[  n-1\right]  }\underbrace{\det\left(
-t\operatorname*{sub}\nolimits_{P}^{P}\left(  A_{\sim i,\sim i}\right)
\right)  }_{=\left(  -t\right)  ^{\left\vert P\right\vert }\det\left(
\operatorname*{sub}\nolimits_{P}^{P}\left(  A_{\sim i,\sim i}\right)  \right)
}\nonumber\\
&  =\sum_{P\subseteq\left[  n-1\right]  }\left(  -t\right)  ^{\left\vert
P\right\vert }\det\left(  \operatorname*{sub}\nolimits_{P}^{P}\left(  A_{\sim
i,\sim i}\right)  \right)  . \label{sol.adjBii=.1}%
\end{align}

Now, let $P$ be an arbitrary subset of $\left[  n-1\right]  $. Write this
subset $P$ in the form $P=\left\{  p_{1},p_{2},\ldots,p_{r}\right\}  $, where
$p_{1}<p_{2}<\cdots<p_{r}$. Furthermore, let $g\in\left\{  0,1,\ldots
,r\right\}  $ be the element that satisfies%
\[
p_{1}<p_{2}<\cdots<p_{g}<i\leq p_{g+1}<p_{g+2}<\cdots<p_{r}.
\]
(Here, $g$ will be $0$ if all elements of $P$ are $\geq i$, and $g$ will be
$r$ if all elements of $P$ are $<i$.) Then, due to the combinatorial nature of
removing rows and columns, we have%
\[
\operatorname*{sub}\nolimits_{P}^{P}\left(  A_{\sim i,\sim i}\right)
=\operatorname*{sub}\nolimits_{P^{\prime}}^{P^{\prime}}A,
\]
where $P^{\prime}$ is the subset $\left\{  p_{1},p_{2},\ldots,p_{g}\right\}
\cup\left\{  p_{g+1}+1,p_{g+2}+1,\ldots,p_{r}+1\right\}  $ of $\left[
n\right]  $. Hence, $\operatorname*{sub}\nolimits_{P}^{P}\left(  A_{\sim
i,\sim i}\right)  $ is a principal submatrix of $A$. Therefore, its
determinant $\det\left(  \operatorname*{sub}\nolimits_{P}^{P}\left(  A_{\sim
i,\sim i}\right)  \right)  $ is a principal minor of $A$, thus belongs to
$\mathcal{P}$.

Forget that we fixed $P$. We thus have shown that $\det\left(
\operatorname*{sub}\nolimits_{P}^{P}\left(  A_{\sim i,\sim i}\right)  \right)
\in\mathcal{P}$ for each $P\subseteq\left[  n-1\right]  $. Therefore,
(\ref{sol.adjBii=.1}) becomes%
\begin{equation}
\left(  \operatorname*{adj}B\right)  _{i,i}=\sum_{P\subseteq\left[
n-1\right]  }\left(  -t\right)  ^{\left\vert P\right\vert }\underbrace{\det
\left(  \operatorname*{sub}\nolimits_{P}^{P}\left(  A_{\sim i,\sim i}\right)
\right)  }_{\in\mathcal{P}}\in\mathcal{P}\left[  t\right]  \subseteq
\mathcal{P}\left[  \left[  t\right]  \right]  . \label{sol.adjBii-in}%
\end{equation}

Now, (\ref{sol.detB-1ii=}) becomes%
\[
\left(  B^{-1}\right)  _{i,i}=\underbrace{\dfrac{1}{\det B}}_{\substack{\in
\mathcal{P}\left[  \left[  t\right]  \right]  \\\text{(by (\ref{sol.1/detB-in}%
))}}}\cdot\underbrace{\left(  \operatorname*{adj}B\right)  _{i,i}%
}_{\substack{\in\mathcal{P}\left[  \left[  t\right]  \right]  \\\text{(by
(\ref{sol.adjBii-in}))}}}\in\mathcal{P}\left[  \left[  t\right]  \right]
\cdot\mathcal{P}\left[  \left[  t\right]  \right]  \subseteq\mathcal{P}\left[
\left[  t\right]  \right]  .
\]
As explained above, this completes our proof of Theorem \ref{thm.main2}.
\end{proof}

Somewhat regrettably, the above proof is the slickest I am aware of. A
more-or-less equivalent proof can be given avoiding the use of power series
(using \cite[Proposition 3.9 and Lemma 3.11]{trach} instead). A more
pedestrian (but harder to formalize) proof uses the Cayley--Hamilton theorem
and a variant of the inclusion/exclusion principle.

\section{Variants}

A counterpart of Theorem \ref{thm.main2} for the off-diagonal entries of
$A^{m}$ exists as well:

\begin{theorem}
\label{thm.main2off}Let $n$, $m$, $R$, $A$ and $\mathcal{P}$ be as in Theorem
\ref{thm.main2}.

Let $i$ and $j$ be two distinct elements of $\left[  n\right]  $. An $\left(
i,j\right)  $\emph{-quasiprincipal minor} of $A$ shall mean a determinant of
the form $\det\left(  \operatorname*{sub}\nolimits_{I}^{J}A\right)  $, where
$I$ and $J$ are two subsets of $\left[  n\right]  $ satisfying%
\[
i\in I\text{ and }j\in J\text{ and }\left\vert I\right\vert =\left\vert
J\right\vert \text{ and }J=\left(  I\setminus\left\{  i\right\}  \right)
\cup\left\{  j\right\}  .
\]
(For instance, if $n\geq7$, then $\det\left(  \operatorname*{sub}%
\nolimits_{\left\{  1,2,7\right\}  }^{\left\{  2,5,7\right\}  }A\right)  $ is
a $\left(  1,5\right)  $-quasiprincipal minor of $A$.)

Let $\mathcal{K}_{i,j}$ be the $\mathbb{Z}$-submodule of $R$ spanned by all
$\left(  i,j\right)  $-quasiprincipal minors of $A$. Then,
\[
\left(  A^{m}\right)  _{i,j}\in\mathcal{P}\cdot\mathcal{K}_{i,j}.
\]
(Here, $\mathcal{P}\cdot\mathcal{K}_{i,j}$ denotes the $\mathbb{Z}$-linear
span of all products of the form $pk$ with $p\in\mathcal{P}$ and
$k\in\mathcal{K}_{i,j}$.)
\end{theorem}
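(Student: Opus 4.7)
The plan is to adapt the proof of Theorem \ref{thm.main2} to off-diagonal entries. I would set $B := I_n - tA \in R^{n\times n}[[t]]$; by (\ref{eq.In-tA.inv}), the $t^m$-coefficient of $(B^{-1})_{i,j}$ equals $(A^m)_{i,j}$, and writing
\[
(B^{-1})_{i,j} \;=\; \dfrac{1}{\det B} \cdot (\operatorname*{adj} B)_{i,j}
\]
reduces the problem to showing that this product lies in $(\mathcal{P} \cdot \mathcal{K}_{i,j})[[t]]$. The proof of Theorem \ref{thm.main2} already supplies $\dfrac{1}{\det B} \in \mathcal{P}[[t]]$, so the remaining task is to prove $(\operatorname*{adj} B)_{i,j} \in \mathcal{K}_{i,j}[t]$. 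Once this is done, the product of a series in $\mathcal{P}[[t]]$ with a polynomial in $\mathcal{K}_{i,j}[t]$ lies in $(\mathcal{P} \cdot \mathcal{K}_{i,j})[[t]]$, since $\mathcal{P} \cdot \mathcal{K}_{i,j}$ is closed under addition and under multiplication by elements of $\mathcal{P}$; extracting the $t^m$-coefficient will then conclude the argument.

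By definition $(\operatorname*{adj} B)_{i,j} = (-1)^{i+j} \det(B_{\sim j,\, \sim i})$, where $B_{\sim j,\, \sim i}$ has entries $\delta_{k,l} - tA_{k,l}$ with $k \in [n]\setminus\{j\}$ and $l \in [n]\setminus\{i\}$. Since $i \neq j$, a ``$1$'' from the Kronecker delta can only appear at positions $(k,k)$ with $k \in [n] \setminus \{i, j\}$. I would expand this determinant by the Leibniz formula and then distribute each factor $\delta_{k,l} - tA_{k,l}$, grouping the resulting terms by the subset $S \subseteq [n] \setminus \{i, j\}$ of indices at which the $\delta$-summand is chosen. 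For each such $S$, the $\delta$-conditions force the permutation to fix $S$ pointwise, so the remaining factors assemble (up to a reindexing sign) into $(-t)^{n-1-|S|}$ times $\det\!\left(\operatorname*{sub}\nolimits^{[n] \setminus (\{i\} \cup S)}_{[n] \setminus (\{j\} \cup S)} A\right)$. Setting $I' := [n] \setminus (\{j\} \cup S)$ and $J' := [n] \setminus (\{i\} \cup S)$, a brief set-theoretic check confirms $i \in I'$, $j \in J'$, $|I'| = |J'|$, and $J' = (I' \setminus \{i\}) \cup \{j\}$, so this minor is an $(i, j)$-quasiprincipal minor of $A$. Summing over $S$ then exhibits $\det(B_{\sim j,\, \sim i}) \in \mathcal{K}_{i,j}[t]$, as required.

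The main obstacle I expect is bookkeeping the signs in the Leibniz expansion: once the $\delta$-factors are fixed on $S$, the remaining bijection from $[n] \setminus (\{i\} \cup S)$ to $[n] \setminus (\{j\} \cup S)$ produces, together with the original Leibniz sign, a product of entries of $-tA$ whose overall sign, relative to the canonical increasing-order reindexing used to define $\det(\operatorname*{sub}\nolimits^{J'}_{I'} A)$, depends on how $i$ and $j$ sit inside $[n] \setminus S$. Because $\mathcal{K}_{i,j}$ is a $\mathbb{Z}$-module, these signs are absorbed harmlessly into the coefficient of each quasiprincipal minor, but they must be tracked carefully to produce a rigorous proof. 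Once this combinatorial accounting is complete, the overall structure is entirely parallel to the proof of Theorem \ref{thm.main2}, with $(\operatorname*{adj} B)_{i,j}$ playing the role formerly played by $(\operatorname*{adj} B)_{i,i}$.
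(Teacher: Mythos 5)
Your proposal is correct and follows the paper's overall skeleton: pass to $B=I_n-tA$, write $(B^{-1})_{i,j}=\frac{1}{\det B}\,(\operatorname{adj}B)_{i,j}$, reuse $\frac{1}{\det B}\in\mathcal{P}[[t]]$ from the proof of Theorem \ref{thm.main2}, and show that every coefficient of $(\operatorname{adj}B)_{i,j}=(-1)^{i+j}\det\left(B_{\sim j,\sim i}\right)$ lies in $\mathcal{K}_{i,j}$, then read off the $t^m$-coefficient. Where you genuinely diverge is in how that key claim is established. Since (\ref{eq.det(B+xI)0}) does not apply to $B_{\sim j,\sim i}$ (its identity part is not $1\cdot I_{n-1}$), the paper uses a row-replacement trick: it replaces the $j$-th row of $B$ by the $i$-th standard basis vector, getting a square matrix $B'$ with $\det B'=(-1)^{i+j}\det\left(B_{\sim j,\sim i}\right)$, applies the more general identity $\det(C+D)=\sum_{P\subseteq[n]}\det\left(\operatorname{sub}\nolimits_P^P C\right)\prod_{k\notin P}d_k$ for diagonal $D$, and Laplace-expands each surviving term along the special row; this keeps everything square and hides all signs inside citable determinant identities. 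You instead expand $\det\left(B_{\sim j,\sim i}\right)$ directly by the Leibniz formula, grouping by the set $S\subseteq[n]\setminus\{i,j\}$ of positions where the Kronecker-delta summand is chosen, arriving at the same expansion $\sum_S \pm\, t^{\,n-1-|S|}\det\left(\operatorname{sub}\nolimits_{[n]\setminus(\{j\}\cup S)}^{[n]\setminus(\{i\}\cup S)}A\right)$, and your set-theoretic check that these minors are $(i,j)$-quasiprincipal is right. The one point you flag but leave unproved is genuinely needed: for a fixed $S$, the sign relating a bijection that fixes $S$ pointwise to its restriction $[n]\setminus(\{j\}\cup S)\to[n]\setminus(\{i\}\cup S)$ must depend only on $S$, or else the group would not assemble into an integer multiple of a single determinant. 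Fortunately it does: for each $s\in S$ the mixed inversions involving $s$ have parity determined solely by how many elements of the two index sets lie below $s$, independently of the bijection. With that detail supplied, your route is a valid, self-contained alternative; the paper's trick buys freedom from this inversion-parity bookkeeping at the cost of introducing the auxiliary matrices $A'$, $I_n'$, $B'$.
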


\begin{proof}
[Proof outline.]This is similar to our above proof of Theorem \ref{thm.main2},
but some changes are needed. Most importantly, instead of proving that
$\left(  \operatorname*{adj}B\right)  _{i,i}\in\mathcal{P}\left[  \left[
t\right]  \right]  $, we now need to show that $\left(  \operatorname*{adj}%
B\right)  _{i,j}\in\mathcal{K}_{i,j}\left[  \left[  t\right]  \right]  $ (that
is, that all coefficients of the power series $\left(  \operatorname*{adj}%
B\right)  _{i,j}$ belong to $\mathcal{K}_{i,j}$). To do so, we apply the
definition of the adjugate matrix to see that
\begin{equation}
\left(  \operatorname*{adj}B\right)  _{i,j}=\left(  -1\right)  ^{j+i}%
\det\left(  B_{\sim j,\sim i}\right)  . \label{pf.thm.main2off.1}%
\end{equation}
We can simplify $B_{\sim j,\sim i}$ further to $-tA_{\sim j,\sim i}+\left(
I_{n}\right)  _{\sim j,\sim i}$ (since $B=I_{n}-tA=-tA+I_{n}$), but
unfortunately this is not the same as $-tA_{\sim j,\sim i}+1I_{n-1}$, and thus
we can no longer apply (\ref{eq.det(B+xI)0}). Instead, we use a trick:

\begin{itemize}
\item We define $A^{\prime}$ to be the matrix obtained from $-tA$ by replacing
the $j$-th row by $\left(  0,0,\ldots,0,1,0,0,\ldots,0\right)  $, where the
only entry equal to $1$ is in the $i$-th position.

\item We define $I_{n}^{\prime}$ to be the matrix obtained from $I_{n}$ by
replacing the $1$ in the $j$-th row by a $0$.

\item We define $B^{\prime}$ to be the matrix obtained from $B$ by replacing
the $j$-th row by $\left(  0,0,\ldots,0,1,0,0,\ldots,0\right)  $, where the
only entry equal to $1$ is in the $i$-th position.
\end{itemize}

Laplace expansion along the $j$-th row shows that
\[
\det\left(  B^{\prime}\right)  =\left(  -1\right)  ^{j+i}\det\left(  \left(
B^{\prime}\right)  _{\sim j,\sim i}\right)  =\left(  -1\right)  ^{j+i}%
\det\left(  B_{\sim j,\sim i}\right)
\]
(since the matrix $B^{\prime}$ differs from $B$ only in the $j$-th row, and
thus we have $\left(  B^{\prime}\right)  _{\sim j,\sim i}=B_{\sim j,\sim i}$).
Comparing this with (\ref{pf.thm.main2off.1}), we find%
\begin{equation}
\left(  \operatorname*{adj}B\right)  _{i,j}=\det\left(  B^{\prime}\right)  .
\label{pf.thm.main2off.2}%
\end{equation}

Furthermore, recall that $B=I_{n}-tA=-tA+I_{n}$. Thus, $B^{\prime}=A^{\prime
}+I_{n}^{\prime}$ (based on how $A^{\prime}$, $I_{n}^{\prime}$ and $B^{\prime
}$ were constructed).

On the other hand, the definition of $I_{n}^{\prime}$ shows that
$I_{n}^{\prime}$ is a diagonal $n\times n$-matrix with diagonal entries
$1,1,\ldots,1,0,1,1,\ldots,1$, where the only diagonal entry equal to $0$ is
in the $j$-th position. However, another classical fact about determinants
(\cite[Theorem 6.4.26]{21s}, \cite[Corollary 6.162]{detnotes}) shows that if
$C$ is any $n\times n$-matrix, and if $D$ is a diagonal $n\times n$-matrix
with diagonal entries $d_{1},d_{2},\ldots,d_{n}$, then%
\[
\det\left(  C+D\right)  =\sum_{P\subseteq\left[  n\right]  }\det\left(
\operatorname*{sub}\nolimits_{P}^{P}C\right)  \cdot\prod_{k\in\left[
n\right]  \setminus P}d_{k}.
\]
We can apply this to $C=A^{\prime}$ and $D=I_{n}^{\prime}$ and $\left(
d_{1},d_{2},\ldots,d_{n}\right)  =\underbrace{\left(  1,1,\ldots
,1,0,1,1,\ldots,1\right)  }_{\text{the }0\text{ is in the }j\text{-th
position}}$, and thus obtain%
\begin{align*}
&  \det\left(  A^{\prime}+I_{n}^{\prime}\right) \\
&  =\sum_{P\subseteq\left[  n\right]  }\det\left(  \operatorname*{sub}%
\nolimits_{P}^{P}\left(  A^{\prime}\right)  \right)  \cdot\prod_{k\in\left[
n\right]  \setminus P}%
\begin{cases}
1, & \text{if }k\neq j;\\
0, & \text{if }k=j
\end{cases}
\\
&  \ \ \ \ \ \ \ \ \ \ \ \ \ \ \ \ \ \ \ \ \left(  \text{since the }k\text{-th
diagonal entry of }I_{n}^{\prime}\text{ is }%
\begin{cases}
1, & \text{if }k\neq j;\\
0, & \text{if }k=j
\end{cases}
\right) \\
&  =\sum_{P\subseteq\left[  n\right]  }\det\left(  \operatorname*{sub}%
\nolimits_{P}^{P}\left(  A^{\prime}\right)  \right)  \cdot%
\begin{cases}
1, & \text{if }j\notin\left[  n\right]  \setminus P;\\
0, & \text{if }j\in\left[  n\right]  \setminus P
\end{cases}
\\
&  =\sum_{\substack{P\subseteq\left[  n\right]  ;\\j\notin\left[  n\right]
\setminus P}}\det\left(  \operatorname*{sub}\nolimits_{P}^{P}\left(
A^{\prime}\right)  \right)  =\sum_{\substack{P\subseteq\left[  n\right]
;\\j\in P}}\det\left(  \operatorname*{sub}\nolimits_{P}^{P}\left(  A^{\prime
}\right)  \right) \\
&  =\sum_{\substack{P\subseteq\left[  n\right]  ;\\j\in P\text{ and }i\in
P}}\det\left(  \operatorname*{sub}\nolimits_{P}^{P}\left(  A^{\prime}\right)
\right)  +\sum_{\substack{P\subseteq\left[  n\right]  ;\\j\in P\text{ and
}i\notin P}}\underbrace{\det\left(  \operatorname*{sub}\nolimits_{P}%
^{P}\left(  A^{\prime}\right)  \right)  }_{\substack{=0\\\text{(since all
entries in the }j\text{-th row of }A^{\prime}\\\text{are }0\text{ except for
the }i\text{-th entry, and thus}\\\text{the matrix }\operatorname*{sub}%
\nolimits_{P}^{P}\left(  A^{\prime}\right)  \text{ has a zero row)}}}\\
&  =\sum_{\substack{P\subseteq\left[  n\right]  ;\\j\in P\text{ and }i\in
P}}\underbrace{\det\left(  \operatorname*{sub}\nolimits_{P}^{P}\left(
A^{\prime}\right)  \right)  }_{\substack{=\pm\det\left(  \operatorname*{sub}%
\nolimits_{P\setminus\left\{  j\right\}  }^{P\setminus\left\{  i\right\}
}\left(  -tA\right)  \right)  \\\text{(by Laplace expansion along the
row}\\\text{that was the }j\text{-th row of }A^{\prime}\text{)}}%
}=\sum_{\substack{P\subseteq\left[  n\right]  ;\\j\in P\text{ and }i\in P}%
}\pm\underbrace{\det\left(  \operatorname*{sub}\nolimits_{P\setminus\left\{
j\right\}  }^{P\setminus\left\{  i\right\}  }\left(  -tA\right)  \right)
}_{=\pm t^{\left\vert P\right\vert -1}\det\left(  \operatorname*{sub}%
\nolimits_{P\setminus\left\{  j\right\}  }^{P\setminus\left\{  i\right\}
}A\right)  }\\
&  =\sum_{\substack{P\subseteq\left[  n\right]  ;\\j\in P\text{ and }i\in
P}}\pm t^{\left\vert P\right\vert -1}\underbrace{\det\left(
\operatorname*{sub}\nolimits_{P\setminus\left\{  j\right\}  }^{P\setminus
\left\{  i\right\}  }A\right)  }_{\substack{\in\mathcal{K}_{i,j}\\\text{(since
}\det\left(  \operatorname*{sub}\nolimits_{P\setminus\left\{  j\right\}
}^{P\setminus\left\{  i\right\}  }A\right)  \text{ is}\\\text{an }\left(
i,j\right)  \text{-quasiprincipal minor of }A\text{)}}}\in\sum
_{\substack{P\subseteq\left[  n\right]  ;\\j\in P\text{ and }i\in P}}\pm
t^{\left\vert P\right\vert -1}\mathcal{K}_{i,j}\subseteq\mathcal{K}%
_{i,j}\left[  \left[  t\right]  \right]  .
\end{align*}
In view of $B^{\prime}=A^{\prime}+I_{n}^{\prime}$, this rewrites as
$\det\left(  B^{\prime}\right)  \in\mathcal{K}_{i,j}\left[  \left[  t\right]
\right]  $. Hence, (\ref{pf.thm.main2off.2}) becomes $\left(
\operatorname*{adj}B\right)  _{i,j}=\det\left(  B^{\prime}\right)
\in\mathcal{K}_{i,j}\left[  \left[  t\right]  \right]  $. Having shown this,
we can finish the proof as we did for Theorem \ref{thm.main2}.
\end{proof}

Another variant of Theorem \ref{thm.main2} is the following:

\begin{theorem}
\label{thm.main3}Let $n$ and $m$ be nonnegative integers. Let $R$ be a
commutative ring. Let $A$ be an $n\times n$-matrix over $R$. Assume that all
principal minors of $A$ equal $1$. Then, all diagonal entries of $A^{m}$ equal
$1$.
\end{theorem}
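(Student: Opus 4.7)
The plan is to derive Theorem \ref{thm.main3} as a direct specialization of Theorem \ref{thm.main1}. For each $i \in \left[ n \right]$, Theorem \ref{thm.main1} produces a universal integer polynomial $P_{n,i,m}$ in $2^{n}$ indeterminates such that $\left( A^{m} \right)_{i,i} = P_{n,i,m} \left( p_{1}, p_{2}, \ldots, p_{2^{n}} \right)$ whenever $p_{1}, p_{2}, \ldots, p_{2^{n}}$ are the principal minors of $A$. Since every principal minor of our $A$ equals $1$, this gives $\left( A^{m} \right)_{i,i} = P_{n,i,m} \left( 1, 1, \ldots, 1 \right)$, so the diagonal entry is the image in $R$ of a fixed integer $P_{n,i,m} \left( 1, 1, \ldots, 1 \right)$ depending only on $n$, $i$, $m$ and not on $R$ or on $A$.

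To pin down that integer, I would apply the same formula to a single well-chosen probe: the identity matrix $A := I_{n}$ over $R := \mathbb{Z}$. Every principal submatrix of $I_{n}$ is itself an identity matrix, so every principal minor of $I_{n}$ equals $1$; meanwhile $I_{n}^{m} = I_{n}$, so every diagonal entry of $I_{n}^{m}$ equals $1$. Substituting into Theorem \ref{thm.main1} therefore yields $1 = P_{n,i,m} \left( 1, 1, \ldots, 1 \right)$ as an equality in $\mathbb{Z}$. Combining the two observations, $\left( A^{m} \right)_{i,i} = 1_{R}$ for every $i \in \left[ n \right]$, which is the claim.

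There is essentially no technical obstacle once Theorem \ref{thm.main1} is in hand; the only step to notice is that $I_{n}$ serves as a universal witness that fixes the value of the constant $P_{n,i,m} \left( 1, 1, \ldots, 1 \right)$. If one prefers an argument that bypasses Theorem \ref{thm.main1} entirely, one may instead revisit the proof of Theorem \ref{thm.main2} under the extra hypothesis: (\ref{sol.detB=.1}) collapses via the binomial theorem to $\det B = \left( 1 - t \right)^{n}$, and (\ref{sol.adjBii=.1}) analogously becomes $\left( \operatorname{adj} B \right)_{i,i} = \left( 1 - t \right)^{n-1}$, so that $\left( B^{-1} \right)_{i,i} = 1 / \left( 1 - t \right) = 1 + t + t^{2} + \cdots$, whose $t^{m}$-coefficient is precisely $\left( A^{m} \right)_{i,i} = 1$.
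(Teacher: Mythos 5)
Your main argument is correct and takes a genuinely different route from the paper. You deduce Theorem \ref{thm.main3} formally from Theorem \ref{thm.main1}: since all principal minors of $A$ equal $1$, the universal polynomial gives $\left(A^{m}\right)_{i,i}=P_{n,i,m}\left(1,1,\ldots,1\right)$ (the image in $R$ of a single integer depending only on $n,i,m$), and you then evaluate that integer by the probe $A=I_{n}$ over $\mathbb{Z}$, whose principal minors are all $1$ and whose powers are $I_{n}$, forcing $P_{n,i,m}\left(1,1,\ldots,1\right)=1$. This is sound: the ordering of the minors is irrelevant once they are all $1$, and evaluating an integer polynomial at all-ones in $R$ is the canonical image of its evaluation in $\mathbb{Z}$, so the identity matrix really does pin down the constant. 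The paper instead proves Theorem \ref{thm.main3} without invoking Theorem \ref{thm.main1} at all: it re-runs the proof of Theorem \ref{thm.main2} under the hypothesis, so that (\ref{sol.detB=.1}) collapses to $\det B=\left(1-t\right)^{n}$ and (\ref{sol.adjBii=.1}) to $\left(\operatorname{adj}B\right)_{i,i}=\left(1-t\right)^{n-1}$, giving $\left(B^{-1}\right)_{i,i}=1/\left(1-t\right)$ with all coefficients $1$ --- which is exactly the alternative you sketch in your last paragraph. Your main route buys modularity and brevity once Theorem \ref{thm.main1} is available (with the identity matrix as an elegant ``universal witness''), at the cost of resting on the full weight of Theorems \ref{thm.main1} and \ref{thm.main2}; the paper's route is self-contained within the power-series computation and yields the explicit closed forms for $\det B$ and $\left(\operatorname{adj}B\right)_{i,i}$ along the way.
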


\begin{proof}
Follow the above proof of Theorem \ref{thm.main2}. From (\ref{sol.detB=.1}),
we obtain%
\[
\det B=\sum_{P\subseteq\left[  n\right]  }\left(  -t\right)  ^{\left\vert
P\right\vert }\underbrace{\det\left(  \operatorname*{sub}\nolimits_{P}%
^{P}A\right)  }_{\substack{=1\\\text{(by assumption, since }\det\left(
\operatorname*{sub}\nolimits_{P}^{P}A\right)  \\\text{is a principal minor of
}A\text{)}}}=\sum_{P\subseteq\left[  n\right]  }\left(  -t\right)
^{\left\vert P\right\vert }=\left(  1-t\right)  ^{n}%
\]
(since the binomial formula yields $\left(  1-t\right)  ^{n}=\sum_{k=0}%
^{n}\dbinom{n}{k}\left(  -t\right)  ^{k}=\sum_{P\subseteq\left[  n\right]
}\left(  -t\right)  ^{\left\vert P\right\vert }$). Let $i\in\left\{
1,2,\ldots,n\right\}  $. From (\ref{sol.adjBii=.1}), we obtain%
\[
\left(  \operatorname*{adj}B\right)  _{i,i}=\sum_{P\subseteq\left[
n-1\right]  }\left(  -t\right)  ^{\left\vert P\right\vert }\underbrace{\det
\left(  \operatorname*{sub}\nolimits_{P}^{P}\left(  A_{\sim i,\sim i}\right)
\right)  }_{\substack{=1\\\text{(by assumption,}\\\text{since }\det\left(
\operatorname*{sub}\nolimits_{P}^{P}\left(  A_{\sim i,\sim i}\right)  \right)
\\\text{is a principal minor of }A\text{)}}}=\sum_{P\subseteq\left[
n-1\right]  }\left(  -t\right)  ^{\left\vert P\right\vert }=\left(
1-t\right)  ^{n-1}%
\]
(again by the binomial formula). Now, (\ref{sol.detB-1ii=}) becomes%
\begin{align*}
\left(  B^{-1}\right)  _{i,i}  &  =\dfrac{1}{\det B}\cdot\left(
\operatorname*{adj}B\right)  _{i,i}=\underbrace{\left(  \operatorname*{adj}%
B\right)  _{i,i}}_{=\left(  1-t\right)  ^{n-1}}\diagup\underbrace{\left(  \det
B\right)  }_{=\left(  1-t\right)  ^{n}}=\left(  1-t\right)  ^{n-1}%
\diagup\left(  1-t\right)  ^{n}\\
&  =\dfrac{1}{1-t}=1+t+t^{2}+t^{3}+\cdots.
\end{align*}
Thus, the $t^{m}$-coefficient of the power series $\left(  B^{-1}\right)
_{i,i}\in R\left[  \left[  t\right]  \right]  $ is $1$. However, we have
already seen that this coefficient is $\left(  A^{m}\right)  _{i,i}$. Thus, we
conclude that $\left(  A^{m}\right)  _{i,i}=1$. This shows that all diagonal
entries of $A^{m}$ equal $1$, so that Theorem \ref{thm.main3} is proved.
\end{proof}

\section{Back to Putnam 2021}

As already mentioned, we do not know whether Theorem \ref{thm.putnam} can be
generalized by replacing \textquotedblleft odd\textquotedblright\ by
\textquotedblleft congruent to $1$ modulo $4$\textquotedblright. More
generally, we are tempted to ask the following:

\begin{question}
\label{quest.putnam-R}Fix a commutative ring $R$. Let $A$ be an $n\times
n$-matrix over $R$. Let $m$ be a nonnegative integer. Assume that each
principal minor of $A$ is $1$. Is it true that each principal minor of $A^{m}$
is $1$ as well?
\end{question}

For $R=\mathbb{Z}/2$, this would yield Theorem \ref{thm.putnam}; the
\textquotedblleft congruent to $1$ modulo $4$\textquotedblright\ variant would
follow for $R=\mathbb{Z}/4$. Theorem \ref{thm.main3} corresponds to the case
when the principal minor of $A^{m}$ is a diagonal entry. The argument from
\cite[Lemma 3.3]{MasPan} shows that Question \ref{quest.putnam-R} has a
positive answer whenever $R$ is an integral domain; thus, the answer is also
positive when $R$ is a product of integral domains. On the other hand, if $R$
can be arbitrary, then the answer to Question \ref{quest.putnam-R} is
negative, but the only counterexample we know is when $R$ is a certain
quotient ring of a polynomial ring\footnote{Here are the details: Let $R$ be
the quotient ring%
\[
\mathbb{Q}\left[  x,y\right]  \diagup\left(  x^{3}+y^{3},xy,x^{4},x^{3}%
y,x^{2}y^{2},xy^{3},y^{4}\right)  ,
\]
and let $A:=\left(
\begin{array}
[c]{cccc}%
1 & 1 & 0 & 0\\
0 & 1 & y & x\\
x & 0 & 1 & y\\
y & 0 & x & 1
\end{array}
\right)  \in R^{4\times4}$. Then, all principal minors of $A$ are $1$, but the
principal minor $\det\left(  \operatorname*{sub}\nolimits_{\left\{
2,3\right\}  }^{\left\{  2,3\right\}  }\left(  A^{2}\right)  \right)
=1-x^{3}-xy$ is not $1$ since $x^{3}\neq0$ in $R$. Actually, we can replace
$\mathbb{Q}$ by any field here (even by $\mathbb{Z}/2$); if this field is
finite, then $R$ becomes a finite ring. (But we cannot turn $R$ into
$\mathbb{Z}/n$ without changing the construction of $A$.)} (and $n=4$ and
$m=2$). The smallest ring $R$ for which the question remains open is
$\mathbb{Z}/4$.

\begin{statement}
\textbf{Update (2026):} Question \ref{quest.putnam-R} has a positive answer
for all rings $R$ of the form $\mathbb{Z}/d$ with $d\in\mathbb{Z}$, and for
many others as well. A proof -- found by the GPT-5.5 LLM -- is available at
\cite{princmins2}.
\end{statement}

\end{document}